\tikzset{
    invisible/.style={opacity=0},
    visible on/.style={alt={#1{}{invisible}}},
    alt/.code args={<#1>#2#3}{%
      \alt<#1>{\pgfkeysalso{#2}}{\pgfkeysalso{#3}}%
  }
}
\newtheorem{theorem}{Theorem}
\newtheorem{proposition}{Proposition}[section]
\theoremstyle{break} 
\newenvironment{proof}%
{{\par\noindent \bf Proof. \nobreak}}%
{\nobreak \removelastskip \nobreak \hfill $\Box$ \medbreak}
{{\par\noindent \bf Proof \nobreak}}%
{\nobreak \removelastskip \nobreak \hfill $\Box$ \medbreak}
{{\par\noindent \bf Proof lemma. \nobreak}}%
{\nobreak \removelastskip \nobreak \bf End proof lemma. \medbreak}
\newenvironment{remark}{\par \medskip \noindent {\bf Remark. }\nobreak}{\par \medskip}
\def\paragraph#1{{\bf #1\ }}
\newcommand{\expo}{\mathrm{e}}
\newcommand{\dd}{\mathrm{d}}
\newcommand{\D}{\mathrm{D}}
\newcommand{\E}{\mathrm{E}}
\title{Uniform propagation of chaos for a dollar exchange econophysics model}
\author{Roberto Cortez \footnotemark[1] \and Fei Cao\footnotemark[2]}
\begin{document}
\maketitle

\footnotetext[1]{Universidad Andrés Bello - Departamento de Matemáticas, Santiago, Chile}
\footnotetext[2]{University of Massachusetts Amherst - Department of Mathematics and Statistics, Amherst, MA 01003, USA}

\tableofcontents


\begin{abstract}


 We study the poor-biased model for money exchange introduced in \cite{cao_derivation_2021}: agents are being randomly picked at a rate proportional to their current wealth, and then the selected agent gives a dollar to another agent picked uniformly at random. Simulations of a stochastic system of finitely many agents as well as a rigorous analysis carried out in \cite{lanchier_rigorous_2017,cao_derivation_2021} suggest that, when both the number of agents and time become large enough, the distribution of money among the agents converges to a Poisson distribution. In this manuscript, we establish a uniform-in-time propagation of chaos result as the number of agents goes to infinity, which justifies the validity of the mean-field deterministic infinite system of ordinary differential equations as an approximation of the underlying stochastic agent-based dynamics.
\end{abstract}

\noindent {\bf Key words: Econophysics, Agent-based model, Uniform propagation of chaos, Coupling, Wasserstein distance}

\section{Introduction}\label{sec:sec1}
\setcounter{equation}{0}

In this manuscript, we study a simple mechanism for money exchange in a closed economical system, meaning that there are a fixed number of agents, denoted by $N$, with an (fixed) average number of dollars $\mu \in \mathbb{N}_+$. We denote by $S_i(t) \in \mathbb{N}$ the amount of dollars the agent $i$ has at time $t$. Since it is a closed economical system, we have
\begin{equation}\label{eq:preserved_sum}
S_1(t)+ \cdots +S_N(t) = N\mu = \textrm{Constant} \qquad \text{for all } t\geq 0.
\end{equation}
Specifically, we consider the so-called poor-biased dollar exchange model investigated in \cite{cao_derivation_2021}: at random times (generated by an exponential law), an agent $i$ is picked at a rate which is proportional to its current wealth, and he or she will give one dollar to another agent $j$ picked uniformly at random. In particular, if agent $i$ does not have at least one dollar, then he/she will never be picked to give. Mathematically, the update rule of this $N$-agent system can be represented by
\begin{equation}
\label{poor_biased_exchange}
\textbf{poor-biased exchange:} \qquad (S_i,S_j)~ \begin{tikzpicture} \draw [->,decorate,decoration={snake,amplitude=.4mm,segment length=2mm,post length=1mm}]
(0,0) -- (.6,0); \node[above,red] at (0.3,0) {\small{$λ\,S_i/N$}};\end{tikzpicture}~  (S_i-1,S_j+1).
\end{equation}

We emphasize that in order to ensure that the rate of a typical agent giving a dollar per unit time is of order $1$ (so that the correct mean-field analysis as $N→+∞$ can be carried out), the rate appearing in \eqref{poor_biased_exchange} is set to be $λ\,S_i/N$ instead of $λ\,S_i$. 

\begin{figure}[!htb]
  \centering
  \includegraphics[scale = 0.7]{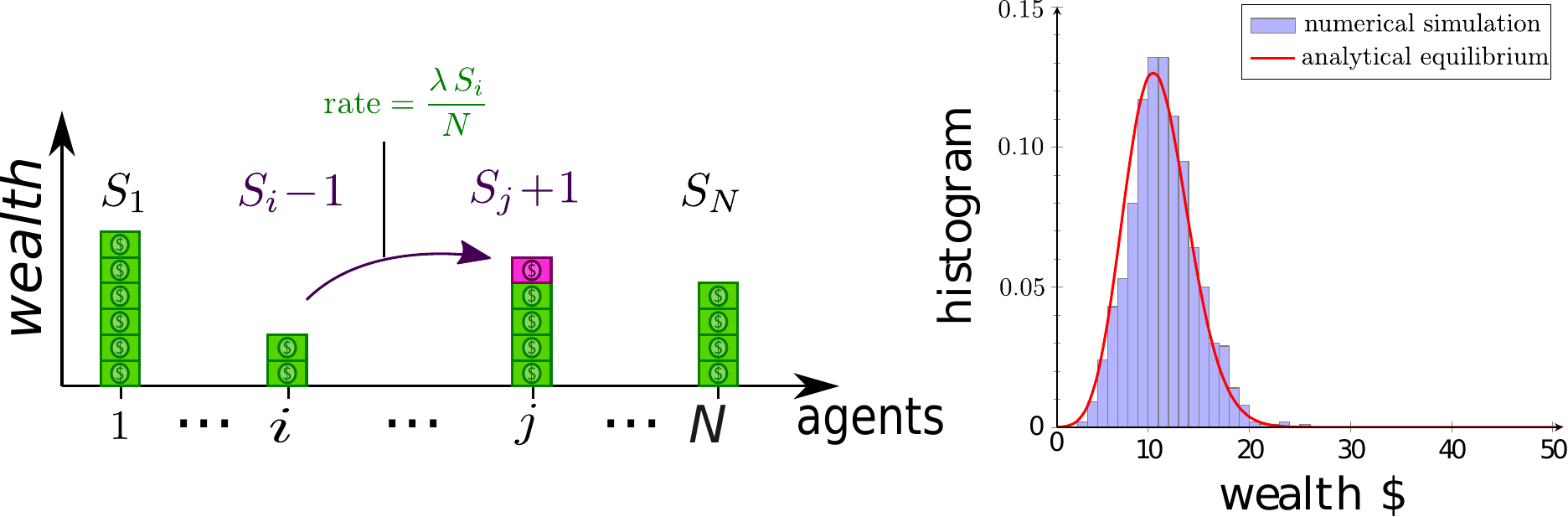}
  \caption{{\bf Left:} Illustration of the poor-biased dollar exchange model: at random time, one dollar is passed from a ``giver'' $i$ to a ``receiver'' $j$ at a rate proportional to the amount of dollars the ``giver'' $i$ has. {\bf Right:} The distribution of wealth for the poor-biased dynamics after $2,000$ unit of time with the average amount of dollar per agent $\mu = 10$, this distribution is well-approximated by a Poisson distribution with mean value $\mu = 10$.}
  \label{fig:illustration_model}
\end{figure}

We illustrate the dynamics in figure \ref{fig:illustration_model}-left. The main task is to identify the limiting distribution of money when both the number of agents $N$ and time $t$ become large enough. We illustrate numerically in figure \ref{fig:illustration_model}-right the simulation result using $N=1,000$ agents. Notice that the wealth distribution is well-approximated by a Poisson distribution with mean value $\mu = 10$.

If we denote by ${\bf p}(t)=\left(p_0(t),p_1(t),\ldots\right)$ the law of the process ${S}_1(t)$ as $N \to \infty$, i.e. $p_n(t) = \lim_{N \to \infty} \mathbb P\left(S_1(t) = n\right)$, it has been shown recently in \cite{cao_derivation_2021} that the time evolution of ${\bf p}(t)$ is given by:
\begin{equation}
  \label{eq:law_limit}
  \frac{\dd}{\dd t} {\bf p}(t) = λ \,\mathcal{L}[{\bf p}(t)]
\end{equation}
with:
\begin{equation}
  \label{eq:L}
  \mathcal{L}[{\bf p}]_n:= \left\{
    \begin{array}{ll}
      p_1-\mu\,p_0 & \quad \text{for}~ n=0, \\
      (n+1)\,p_{n+1}+\mu\,p_{n-1}- \left(n+\mu\right)\,p_n & \quad \text{for}~ n \geq 1.
    \end{array}
  \right.
\end{equation}
 A harmless normalization allows us to put $\lambda = 1$ without any loss of generality, which will be implicitly assumed throughout the manuscript. The linear ODE system \eqref{eq:law_limit} is of Fokker-Planck type and hence admits an interpretation in terms of a `gain' and `loss' process, shown in figure \ref{fig:ODE_illustration}.

\begin{figure}[!htb]
\centering
\includegraphics[scale=1.0]{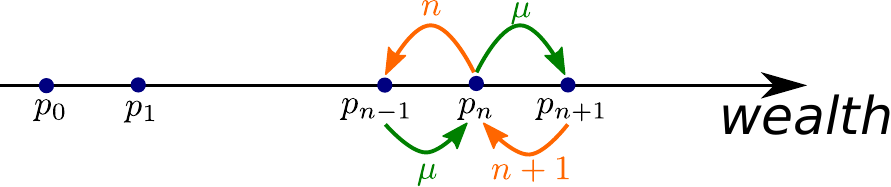}
\caption{Schematic illustration of the limiting ODE system \eqref{eq:law_limit}.}
\label{fig:ODE_illustration}
\end{figure}

The transition from the stochastic $N$-agent dynamics \eqref{poor_biased_exchange} to the infinite system of ODEs \eqref{eq:law_limit} as $N \to \infty$ is accomplished thanks to the notion of \emph{propagation of chaos} \cite{sznitman_topics_1991} and has been rigorously justified in \cite{cao_derivation_2021}. Unfortunately, only a finite time propagation of chaos result is obtained in \cite{cao_derivation_2021}, meaning that the evolution equation \eqref{eq:law_limit} is only guaranteed to be a good approximation of the $N$-agent system over the time span $t \in [0,T]$ with $T > 0$ being arbitrary but fixed. In this work, we aim to establish a uniform in time propagation of chaos for this particular dynamics, which refines the previous short time result.

We first summarize the main result of \cite{cao_derivation_2021} regarding the large time behavior of the linear ODE system in the following proposition and refer interested readers to \cite{cao_derivation_2021} for detailed proofs and discussion.

\begin{proposition}\label{summarize_ode} 
Let ${\bf p}(t)=\{p_n(t)\}_{n\geq 0}$ be the unique solution of \eqref{eq:law_limit} with $p(0) \in V_\mu$, where \[V_\mu:=\biggl\{\,{\bf p} \mathrel{\Big|} p_n \geq 0,~\sum_{n=0}^\infty p_n =1,~\sum_{n=0}^\infty n\,p_n =\mu \,\biggr\}\]
 is the space of probability mass functions on $\mathbb{N}$ with the pre-fixed mean value $\mu$. Then
\begin{equation}
\label{eq:conservation_mass_mean_value}
\sum_{n=0}^\infty \mathcal{L}[{\bf p}]_n =0,~~\textrm{and}~~ \sum_{n=0}^\infty n\,\mathcal{L}[{\bf p}]_n =0.
\end{equation}
In particular, we have ${\bf p}(t) \in V_\mu$ for all $t\geq 0$.  Moreover, the unique equilibrium distribution ${\bf p}^*=\{p^*_n\}_n$ in $V_\mu$ associated with \eqref{eq:law_limit} is given by the following Poisson distribution:
\begin{equation}
\label{eqn:equil_limit}
p^*_n = \frac{\mu^n\,\expo^{-\mu}}{n!},\quad n\geq 0.
\end{equation}
Moreover, if we introduce the following energy functional for each ${\bf p} \in V_\mu$:
\begin{equation}\label{eq:E}
\E[{\bf p}] = \sum\limits_{n=0}^\infty \frac{p^2_n}{p^*_n}
\end{equation}
and \begin{equation}\label{eq:D}
\D[{\bf p}] = \sum\limits_{n=0}^\infty p^*_n\,\left(\frac{p_{n+1}}{p^*_{n+1}} - \frac{p_n}{p^*_n}\right)^2,
\end{equation}
then \begin{subequations}\label{eq:Bakry_Emery}
\begin{align}
\frac{\dd \E[{\bf p}(t) - {\bf p}^*]}{\dd t} &= -2\,\mu\,\D[{\bf p}(t)] \label{eq:subeq_a} \\
\frac{\dd^2 \E[{\bf p}(t) - {\bf p}^*]}{\dd t^2} &\geq -2\,\frac{\dd \E[{\bf p}(t) - {\bf p}^*]}{\dd t} \label{eq:subeq_b}.
\end{align}
\end{subequations}
Consequently, ${\bf p}(t)$ decays exponentially fast towards ${\bf p}^*$ in the sense that
\begin{equation}\label{eq:exponential_decay}
\E[{\bf p}(t) - {\bf p}^*] \leq \E[{\bf p}(0) - {\bf p}^*]\,\expo^{-t}.
\end{equation}
\end{proposition}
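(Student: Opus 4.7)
The plan is to verify the five assertions in order, each via an elementary direct computation once the right change of variables is in place.

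First, the two conservation identities in \eqref{eq:conservation_mass_mean_value} come from telescoping: for $\sum_n \mathcal{L}[\mathbf{p}]_n$, the terms $(n+1)p_{n+1}$ and $-n p_n$ cancel pairwise, while the gain and loss contributions $\mu p_{n-1}$ and $-\mu p_n$ cancel after a shift of index; the computation for $\sum_n n\,\mathcal{L}[\mathbf{p}]_n$ is similar but handles the extra weight by writing $n = (n+1)-1$. These identities guarantee $\mathbf{p}(t)\in V_\mu$ for all $t\geq 0$. The equilibrium identity $\mathcal{L}[\mathbf{p}^*]=0$ follows directly from the Poisson recurrence $(n+1)p^*_{n+1}=\mu p^*_n$; for uniqueness in $V_\mu$, I would set $a_n := (n+1)p_{n+1}-\mu p_n$ and observe that the equation $\mathcal{L}[\mathbf{p}]_n=0$ rewrites as $a_n=a_{n-1}$ for $n\geq 1$ together with $a_0=0$, forcing the Poisson recurrence on $\mathbf{p}$, and the normalization in $V_\mu$ then fixes $p_0=\expo^{-\mu}$.

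The identity \eqref{eq:subeq_a} is obtained by switching to the density ratios $h_n:=p_n/p^*_n$, which turn the generator into the clean form
\begin{equation*}
\mathcal{L}[\mathbf{p}]_n = p^*_n\bigl(\mu(h_{n+1}-h_n)-n(h_n-h_{n-1})\bigr),
\end{equation*}
where I have used both $(n+1)p^*_{n+1}=\mu p^*_n$ and $n p^*_n=\mu p^*_{n-1}$. A discrete integration by parts against $h_n-1$, combining the two halves of the sum via the index shift that absorbs the factor of $n$ into $n p^*_n=\mu p^*_{n-1}$, collapses the expression into $-2\mu\sum_n p^*_n(h_{n+1}-h_n)^2 = -2\mu\,\mathcal{D}[\mathbf{p}]$.

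The Bakry--\'{E}mery inequality \eqref{eq:subeq_b} is equivalent, via \eqref{eq:subeq_a}, to $\tfrac{\dd}{\dd t}\mathcal{D}[\mathbf{p}(t)]\leq -2\,\mathcal{D}[\mathbf{p}(t)]$. I would compute this derivative directly by setting $D_n:=h_{n+1}-h_n$; from $\dot{h}_n=\mu D_n - n D_{n-1}$ (with the convention $D_{-1}=0$) one finds $\dot{D}_n=\mu D_{n+1}-(\mu+n+1)D_n + n D_{n-1}$. Substituting into $\tfrac{\dd\mathcal{D}}{\dd t}=2\sum_n p^*_n D_n \dot{D}_n$ and using $n p^*_n=\mu p^*_{n-1}$ to merge the $D_n D_{n-1}$ and $D_n D_{n+1}$ contributions yields a single cross term $4\mu\sum_n p^*_n D_n D_{n+1}$ plus the diagonal $-2\sum_n p^*_n(\mu+n+1)D_n^2$. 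Applying the elementary AM--GM estimate $2D_n D_{n+1}\leq D_n^2+D_{n+1}^2$ and shifting $D_{n+1}^2$ back via the same Poisson-weight identity bounds the cross term by $2\sum_n p^*_n(\mu+n)D_n^2$, leaving exactly $\tfrac{\dd\mathcal{D}}{\dd t}+2\,\mathcal{D}\leq 0$. I expect this weighted AM--GM step, together with the careful bookkeeping of the Poisson-weighted index shifts, to be the only real obstacle in the argument.

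Finally, the exponential decay \eqref{eq:exponential_decay} follows by a short Bakry--\'{E}mery-type Gr\"{o}nwall argument: the previous inequality gives $\mathcal{D}[\mathbf{p}(t+s)]\leq \mathcal{D}[\mathbf{p}(t)]\,\expo^{-2s}$ for all $s\geq 0$, and integrating \eqref{eq:subeq_a} from $t$ to $\infty$ (using that $\mathcal{E}[\mathbf{p}(t)-\mathbf{p}^*]\to 0$ as $t\to\infty$, which follows from the exponential decay of $\mathcal{D}$ together with the uniqueness of the equilibrium in $V_\mu$ already established) yields the Poincar\'{e}-type bound $\mathcal{E}[\mathbf{p}(t)-\mathbf{p}^*]\leq \mu\,\mathcal{D}[\mathbf{p}(t)]$; combined with \eqref{eq:subeq_a} this gives $\tfrac{\dd}{\dd t}\mathcal{E}[\mathbf{p}(t)-\mathbf{p}^*]\leq -2\,\mathcal{E}[\mathbf{p}(t)-\mathbf{p}^*]$, and Gr\"{o}nwall delivers the claimed bound (in fact with the stronger rate $2$, which a fortiori implies the stated rate $1$).
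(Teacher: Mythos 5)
The paper itself offers no proof of this proposition: it is explicitly quoted as a summary of results from \cite{cao_derivation_2021}, with the reader referred there for details, so your attempt can only be judged on its own terms. Most of it holds up: the telescoping argument for \eqref{eq:conservation_mass_mean_value}, the characterization and uniqueness of the equilibrium via $a_n=(n+1)p_{n+1}-\mu p_n$, the rewriting $\mathcal{L}[{\bf p}]_n=p^*_n\bigl(\mu(h_{n+1}-h_n)-n(h_n-h_{n-1})\bigr)$ with $h_n=p_n/p^*_n$, the discrete integration by parts giving \eqref{eq:subeq_a}, and the computation of $\frac{\dd}{\dd t}\D$ with the weighted AM--GM step giving $\frac{\dd}{\dd t}\D[{\bf p}(t)]\leq -2\,\D[{\bf p}(t)]$, i.e.\ \eqref{eq:subeq_b}, are all algebraically correct (modulo the usual unaddressed summability/term-by-term differentiation justifications, e.g.\ the rearrangements behind $\sum_n n\,\mathcal{L}[{\bf p}]_n=0$ implicitly use moments beyond the first).

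The genuine gap is in the last step. Your derivation of the Poincar\'e-type bound $\E[{\bf p}(t)-{\bf p}^*]\leq \mu\,\D[{\bf p}(t)]$ integrates \eqref{eq:subeq_a} from $t$ to $\infty$ and discards the boundary term, which requires $\E[{\bf p}(t)-{\bf p}^*]\to 0$ as $t\to\infty$. You assert this ``follows from the exponential decay of $\D$ together with the uniqueness of the equilibrium,'' but it does not: $\E$ is non-increasing, hence converges to some $L\geq 0$, and decay of $\D$ plus uniqueness of the equilibrium in $V_\mu$ only yields (after a tightness argument and Fatou) pointwise/weak convergence of ${\bf p}(t)$ to ${\bf p}^*$, which by lower semicontinuity gives $0\leq L$ — the useless direction. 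Convergence in the weighted $\chi^2$ sense measured by $\E$ is a strictly stronger statement; ruling out $L>0$ (equivalently, ruling out that the mass of $p^*_n(h_n-1)^2$ escapes to large $n$) needs an extra input, e.g.\ the classical Poincar\'e inequality for the Poisson measure, $\mathrm{Var}_{p^*}(h)\leq\mu\sum_n p^*_n(h_{n+1}-h_n)^2$, or a spectral/compactness argument for the associated birth--death generator. Note that with such an input your argument would deliver $\E[{\bf p}(t)-{\bf p}^*]\leq\E[{\bf p}(0)-{\bf p}^*]\,\expo^{-2t}$, i.e.\ exactly the refined decay \eqref{eq:exponential_decay_refined} that the paper's remark describes as conjectural — a strong hint that this unproved step is where the real content sits, and that as written even the stated rate-$1$ bound \eqref{eq:exponential_decay} is not yet fully established by your argument.
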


\begin{remark}
The exponential decay result \eqref{eq:exponential_decay} is obtained via the celebrated Bakry-Émery approach \cite{bakry_diffusions_1985} and the rate appearing on the right hand side of \eqref{eq:exponential_decay} seems to be half of the sharp rate based on numerical simulations carried out in \cite{cao_derivation_2021}. In other words, numerical experiments suggest that we can strengthen \eqref{eq:exponential_decay} to
\begin{equation}\label{eq:exponential_decay_refined}
\E[{\bf p}(t) - {\bf p}^*] \leq \E[{\bf p}(0) - {\bf p}^*]\,\expo^{-2\,t}.
\end{equation}
\end{remark}

Lastly, we emphasize that the convergence to the Poisson distribution can be also studied from a different point of view. For instance, Lanchier \cite{lanchier_rigorous_2017} investigated a discrete-time analog of the model by sending $t \to \infty$ first prior to sending $N \to \infty$. To give a brief account of this approach, we denote by ${\bf S}(t) = \left(S_1(t),\ldots,S_N(t)\right)$ and recall that the vector ${\bf S}(t)$ is a Markov pure-jump process on the following configuration space
\begin{equation}\label{eq:state_space}
\mathscr{A}_{N,\mu} \coloneqq \big\{{\bf S} \in \mathbb{N}^N \mid \sum_{n=1}^N S_i = N\mu\big\}.
\end{equation}
The key insight behind this approach lies in the fact for any fixed $N \in \mathbb{N}_+$, the dynamics starting from any initial configuration converges (as $t\to \infty$) to the multinomial distribution on $\mathscr{A}_{N,\mu}$ corresponding to taking $N \mu$ independent samples uniformly at random from the set $\{1,\ldots,N\}$, with replacement. More specifically, this distribution, which we denote $\mathcal{M}_N$, is given by
\begin{equation}
\label{eq:multinomial}
\mathcal{M}_N \left({\bf S}\right) \coloneqq \binom{N\mu}{S_1,S_2,\ldots,S_N}\prod\limits_{i \in [N]} \frac{1}{N^{S_i}}.
\end{equation}
This roughly means that each dollar will be equally likely to be in any agent's pocket when time becomes sufficiently large. Then the large population limit $N \to \infty$ can be performed with the help of some basic algebra and combinatorial counting techniques.

We encapsulate the various approaches introduced so far in figure \ref{fig:diagram}.

\begin{figure}[!htb]
\centering
\begin{tikzpicture}[scale=0.8]
\tikzset{->-/.style={decoration={
 markings,
 mark=at position #1 with {\arrow{>}}},postaction={decorate}}}
\matrix (m) [matrix of math nodes,row sep=3em,column sep=4em,minimum width=2em]
  {
     N~\text{agents} & \hspace{0.3in}\text{Multinomial distribution} \\
  \text{ODE system} &  \text{Poisson distribution} \\};
  \path[-stealth]
    (m-1-1) edge [blue] node [left] {{\color{blue}{$N \rightarrow \infty$}}} (m-2-1)
    (m-1-1 -| m-2-1.east)  edge [red] node [above] {{\color{red}{$t \rightarrow \infty$}}} (m-1-2 -| m-2-2.west)
    (m-1-1) edge [dashed,green] (m-2-2)
    (m-2-1.east|-m-2-2) edge[blue] node [below] {{\color{blue}{$t \rightarrow \infty$}}} (m-2-2)
    (m-1-2) edge [red] node [right] {{\color{red}{$N \rightarrow \infty$}}} (m-2-2);
            edge [-] (m-2-1);
\end{tikzpicture}
\caption{Roadmap for proving convergence results. The approach of taking the large time limit $t\to \infty$ before taking the large population limit $N \to \infty$ is adapted in \cite{lanchier_rigorous_2017}. An alternative approach is to send $N \to \infty$ first before investigating the large time asymptotic \cite{cao_derivation_2021}.}
\label{fig:diagram}
\end{figure}

Although we will only investigate a specific binary exchange models in the present work, the literature on other types of econophysics models based on different exchange rules is vast. To name a few, the so-called immediate exchange model studied in \cite{heinsalu_kinetic_2014} assumes that pairs of agents are randomly and uniformly picked at each random time, and each of the agents transfer a random fraction of its money to the other agents, where these fractions are independent and uniformly distributed in $[0,1]$. The so-called uniform reshuffling model investigated in \cite{cao_entropy_2021,dragulescu_statistical_2000} requires that the total amount of money of two randomly and uniformly picked agents possess before interaction is uniformly redistributed among the two agents after interaction. The so-called unbiased exchange model and the rich-biased exchange proposed in \cite{cao_derivation_2021} are closely related variants of the poor-biased exchange model investigated in this work, where the variations of these models differ in the rate of an agent (say agent $i$) being picked to give out a dollar. Indeed, for the unbiased exchange dynamics and the rich-biased exchange dynamics, one modify the corresponding update rules \eqref{poor_biased_exchange} to (recall that if agent $i$ has no dollars to give, then nothing will happen)
\begin{equation}
\label{unbiased_biased_exchange}
\textbf{unbiased exchange:} \qquad (S_i,S_j)~ \begin{tikzpicture} \draw [->,decorate,decoration={snake,amplitude=.4mm,segment length=2mm,post length=1mm}]
(0,0) -- (.6,0); \node[above,red] at (0.3,0) {\small{$λ\slash N$}};\end{tikzpicture}~  (S_i-1,S_j+1)
\end{equation}
and \begin{equation}
\label{rich_biased_exchange}
\textbf{rich-biased exchange:} \qquad (S_i,S_j)~ \begin{tikzpicture} \draw [->,decorate,decoration={snake,amplitude=.4mm,segment length=2mm,post length=1mm}]
(0,0) -- (.6,0); \node[above,red] at (0.3,0) {\small{$λ\slash (N\,S_i)$}};\end{tikzpicture}~  (S_i-1,S_j+1),
\end{equation}
respectively. For other models arising from econophysics, we refer to \cite{chakraborti_statistical_2000,chatterjee_pareto_2004, lanchier_rigorous_2019,cao_explicit_2021,during_kinetic_2008,merle_cutoff_2019} and the references therein.

To the best of our knowledge, uniform in time propagation of chaos for other models coming from econophysics has only been studied in \cite{cortez_uniform_2016,cortez_quantitative_2016,cortez_particle_2018} which include the uniform reshuffling model and the immediate exchange model as special cases. The approach taken in \cite{cortez_uniform_2016,cortez_quantitative_2016,cortez_particle_2018} is an ``optimal-coupling'' type argument, which can be dated back to 1970s \cite{murata_propagation_1977,tanaka_probabilistic_1978} and which relies on a stochastic differential equation (SDE) representation of the agent-based stochastic dynamics in terms of Poisson random measures. A very recent work \cite{cao_interacting_2022} has established a uniform propagation of chaos result for the unbiased exchange model, based on a careful study of the entropy-entropy dissipation relation, at the level of the $N$-agent system as well as its associated mean-field system of nonlinear ODEs. Our approach to the uniform propagation of chaos for the poor-biased exchange model at hand will be built upon probabilistic coupling methods, and the non-uniform propagation of chaos shown in \cite{cao_derivation_2021}. 


\section{Uniform propagation of chaos}\label{sec:sec2}
\setcounter{equation}{0}

Throughout this section, we will employ the notation $\mathcal{L}({\bf X})$ to represent the law of a generic random variable or vector ${\bf X}$. We set $[N] \coloneqq \{1,2\ldots,N\}$ for notational simplicity, and we will also quantify convergence of probability measures via the Wasserstein distance (of order $1$): for probability measures $\mu, \nu$ on $\mathbb{N}^d$ with finite first moment, it is defined by
\begin{equation}\label{def:Wasserstein}
\mathcal{W}_1(\mu,\nu) \coloneqq \inf\limits_{{\bf X},{\bf Y}} \mathbb{E}\left[\frac{1}{d}\sum\limits_{j=1}^d |X_j - Y_j| \right],
\end{equation}
in which the infimum is taken over all possible couplings of $\mu$ and $\nu$, or equivalently over all pair of random vectors ${\bf X} = (X_1,\ldots,X_d)$ and ${\bf Y} = (Y_1,\ldots,Y_d)$ such that ${\bf X}$ and ${\bf Y}$ are distributed according to $\mu$ and $\nu$ respectively. Before we dive into the details of the proof of Theorem \ref{thm:uniform_poc_k_particle_marginal} on the uniform propagation of chaos, we summarize the spirit behind the proof in figure \ref{fig:sketch_proof_unif_chaos} below.

\begin{figure}[!htb]
\centering
\includegraphics[scale=0.7]{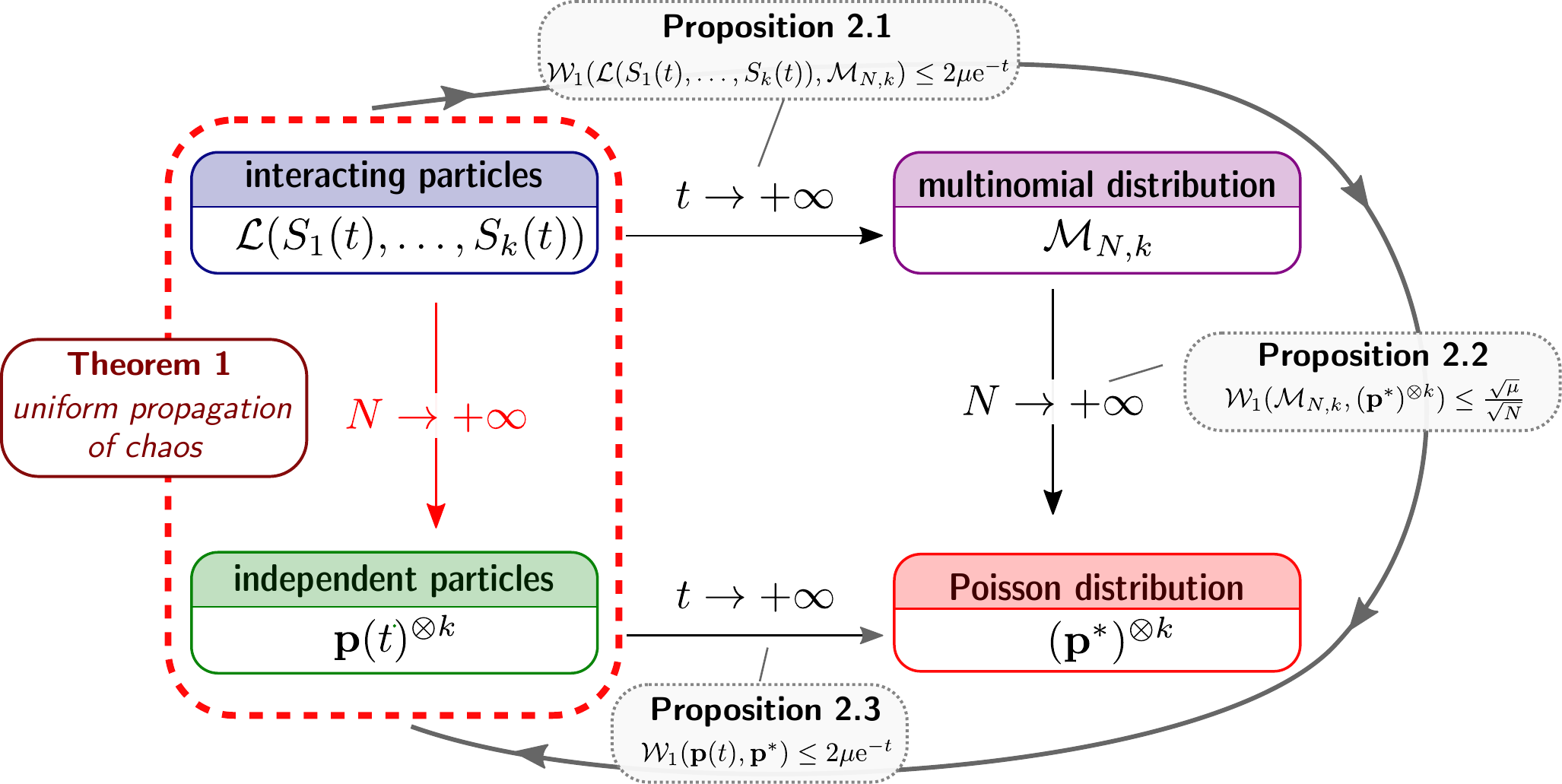}
\caption{Schematic illustration of the strategy behind the proof of the uniform in time propagation of chaos for the poor-biased dollar exchange model.}
\label{fig:sketch_proof_unif_chaos}
\end{figure}

To control $\mathcal{W}_1\left(\mathcal{L}\left(S_1(t),S_2(t),\ldots,S_k(t)\right), {\bf p}(t)^{\otimes k}\right)$ (where $1\leq k \leq N$ is fixed and does not grow as $N \to \infty$), we apply the triangle inequality twice as follows:
\begin{equation}\label{eq:roadmap}
\begin{aligned}
&\mathcal{W}_1\left(\mathcal{L}\left(S_1(t),S_2(t),\ldots,S_k(t)\right), {\bf p}(t)^{\otimes k}\right) \\
&\quad \leq  \mathcal{W}_1\left(\mathcal{L}\left(S_1(t),S_2(t),\ldots,S_k(t)\right), \mathcal{M}_{N,k}\right) + \mathcal{W}_1\left(\mathcal{M}_{N,k},({\bf p}^*)^{\otimes k}\right) \\
&\qquad + \mathcal{W}_1\left({\bf p}(t)^{\otimes k},({\bf p}^*)^{\otimes k}\right),
\end{aligned}
\end{equation}
where $\mathcal{M}_{N,k}$ is the $k$-particle marginal of $\mathcal{M}_N$ and ${\bf p}^*$ is the Poisson distribution with mean $\mu$. Section \ref{subsec:2.1} is devoted to the bound on $\mathcal{W}_1\left(\mathcal{L}\left(S_1(t),S_2(t),\ldots,S_k(t)\right), \mathcal{M}_{N,k}\right)$, Section \ref{subsec:2.2} and Section \ref{subsec:2.3} treat the bound on $\mathcal{W}_1\left(\mathcal{M}_{N,k},({\bf p}^*)^{\otimes k}\right)$ and $\mathcal{W}_1\left({\bf p}(t)^{\otimes k},({\bf p}^*)^{\otimes k}\right)$, respectively. Finally, the statement and the proof of the uniform propagation of chaos are presented in Section \ref{subsec:2.4}.

\subsection{Equilibration of the $N$-agents dynamics} \label{subsec:2.1}

One nice feature of the poor-biased exchange model is that, from the point of view of each individual dollar, the dynamics is very simple: each dollar jumps randomly and independently from pocket to pocket.
More specifically, we denote $M \coloneqq N\mu$ and introduce
\begin{equation}\label{eq:dollar-wise}
{\bf a}(t) \coloneqq \left(a_1(t),\ldots,a_M(t)\right) \in \{1,\ldots,N\}^M,
\end{equation}
in which $a_k(t)$ represents the (label of the) agent assigned to dollar $k$ at time $t$.
The dollar-wise poor-biased dynamics is specified as follows:
at each random time generated by an exponential clock with rate $\frac{N}{N-1}$, we pick a dollar $k \in \{1,\ldots,M\}$ and an agent $i \in \{1,\ldots,N\}$ independently and uniformly at random, then we update the label (or value) of $a_k$ to $i$. Notice that when $i=a_k$, nothing happens; this is why we set the rate to $\frac{N}{N-1}$: so that the \emph{effective} jump rate of each dollar is $1$. Consequently, when going back to the agent-wise dynamics (see \eqref{eq:agent-wise} below), one recovers exactly the same rates specified in \eqref{poor_biased_exchange}.

A natural way to couple two such processes ${\bf a}(t) \in \{1,\ldots,N\}^M$ and ${\bf b}(t) \in \{1,\ldots,N\}^M$ (with possibly different initial conditions) is to employ the same jump times, and pick the same dollar $k \in \{1,\ldots,M\}$ and agent $i \in \{1,\ldots,N\}$ in each update. To compare these processes, we will use the distance on $\{1,\ldots,N\}^M$ given by
\begin{equation}\label{eq:rho}
\rho({\bf a},{\bf b}) \coloneqq \sum\limits_{k=1}^M \mathbbm{1}\{a_k \neq b_k\}
\end{equation}
for all ${\bf a}, {\bf b} \in \{1,\ldots,N\}^M$. Now, if we let $\tau_k$ be the first time dollar $k$ is picked, then $\{\tau_k\}_{k=1}^M$ will be i.i.d with $\tau_k \sim \textrm{Exponential}(1)$, whence
\begin{align}
\notag
\mathbb{E}\,\rho\left({\bf a}(t),{\bf b}(t)\right)
&= \sum\limits_{k=1}^M \mathbb{P}\left(a_k(t) \neq b_k(t)\right) \\
\notag
&\leq \sum\limits_{k=1}^M \mathbb{P}\left(\tau_k > t\right) \\
\notag
&= M\,\expo^{-t} \\
\label{eq:estimate_rho}
&= N\,\mu\,\expo^{-t}.
\end{align}
This upper bound grows linearly with $N$, which in principle is unsatisfactory. However, we will now re-write this coupling in the setting of the agent-wise dynamics, and we will use an appropriate distance to compare these processes; then the estimate \eqref{eq:estimate_rho} will become useful.

Specifically, given ${\bf a}, {\bf b} \in \{1,\ldots,N\}^M$, we denote ${\bf Q}^{\bf a} = (Q^{\bf a}_1,\ldots,Q^{\bf a}_N) \in \mathscr{A}_{N,\mu}$ the vector of the number of dollars that each agent has according to ${\bf a}$, that is
\begin{equation}\label{eq:agent-wise}
Q^{\bf a}_i \coloneqq \sum\limits_{k=1}^M \mathbbm{1}\{a_k = i\},
\end{equation}
for all $1\leq i\leq N$, and similarly for ${\bf Q}^{\bf b} = (Q^{\bf b}_1,\ldots,Q^{\bf b}_N)$. We introduce the distance on $\mathscr{A}_{N,\mu}$ via
\begin{equation}\label{eq:d}
\dd\left({\bf S},{\bf R}\right) \coloneqq \frac{1}{N}\,\sum\limits_{i=1}^N \left|S_i - R_i\right|
\end{equation}
for all ${\bf S},{\bf R} \in \mathscr{A}_{N,\mu}$. Then:
\begin{align}
\notag
\dd\left({\bf Q}^{\bf a},{\bf Q}^{\bf b}\right) &= \frac{1}{N}\,\sum\limits_{i=1}^N\left|\sum\limits_{k=1}^M \mathbbm{1}\{a_k = i\} - \sum\limits_{k=1}^M \mathbbm{1}\{b_k = i\}\right| \\
\notag
&\leq \frac{1}{N}\,\sum\limits_{i=1}^N\sum\limits_{k=1}^M \left|\mathbbm{1}\{a_k = i\} -  \mathbbm{1}\{b_k = i\}\right| \\
\notag
&= \frac{1}{N}\,\sum\limits_{i=1}^N\sum\limits_{k=1}^M \left(\mathbbm{1}\{a_k = i, b_k \neq a_k\} + \mathbbm{1}\{b_k = i, a_k \neq b_k\}\right)\\
\notag
&= \frac{1}{N}\,\sum\limits_{k=1}^M \mathbbm{1}\{a_k \neq b_k\}\left(\sum\limits_{i=1}^N \mathbbm{1}\{a_k = i\} + \sum\limits_{i=1}^N \mathbbm{1}\{b_k = i\}\right) \\
\label{eq:dQaQb}
&= \frac{2}{N}\,\rho({\bf a},{\bf b}).
\end{align}
Now, define the processes ${\bf S}(t)$ and ${\bf R}(t)$ by
\[
{\bf S}(t) = {\bf Q}^{{\bf a}(t)}
\qquad \text{and} \qquad
{\bf R}(t) = {\bf Q}^{{\bf b}(t)},
\]
where $({\bf a}(t), {\bf b}(t))$ is the coupling defined above. It is straightforward to verify that ${\bf S}(t)$ and ${\bf R}(t)$ are indeed realizations of the poor-biased exchange model \eqref{poor_biased_exchange}. Consequently, using the estimates \eqref{eq:estimate_rho} and \eqref{eq:dQaQb}, we arrive at
\begin{align}
\notag
\mathcal{W}_1\left(\mathcal{L}\left({\bf S}(t)\right),\mathcal{L}\left({\bf R}(t)\right)\right) &\leq \mathbb{E}\,\dd\left({\bf S}(t),{\bf R}(t)\right) \\
\notag
&\leq \frac{2}{N}\,\mathbb{E}\,\rho\left({\bf a}(t),{\bf b}(t)\right) \\
&\leq 2\,\mu\,\expo^{-t}.
\label{eq:contraction_W1}
\end{align}

We summarize the previous discussions into the following proposition.

\begin{proposition}\label{prop:contraction_coupling}
Let ${\bf S}(t)$ and ${\bf R}(t)$ be two realizations of the poor-biased exchange model given by \eqref{poor_biased_exchange}, starting from any given pair of initial configurations ${\bf S}(0),{\bf R}(0) \in \mathscr{A}_{N,\mu}$. Then, we have
\[
\mathcal{W}_1\left(\mathcal{L}\left({\bf S}(t)\right),\mathcal{L}\left({\bf R}(t)\right)\right) \leq 2\,\mu\,\expo^{-t}.
\]
In particular, since the multinomial distribution $\mathcal{M}_N$
is the unique stationary distribution for the poor-biased exchange dynamics as $t \to \infty$ while $N$ is kept frozen (see for instance \cite{lanchier_rigorous_2017}), we also deduce that
\begin{equation}\label{eq:conver_to_multinomial}
\mathcal{W}_1\left(\mathcal{L}\left({\bf S}(t)\right),\mathcal{M}_N \right) \leq 2\,\mu\,\expo^{-t}.
\end{equation}
\end{proposition}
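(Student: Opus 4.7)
The plan is to exploit the dollar-wise representation introduced above, which turns the dynamics into a product of independent one-dollar jump processes, and then build a synchronous coupling of two trajectories that contracts exponentially in Wasserstein distance.

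First, I would lift the agent-wise initial configurations ${\bf S}(0)$ and ${\bf R}(0)$ to dollar-wise initial configurations ${\bf a}(0),{\bf b}(0)\in\{1,\ldots,N\}^{N\mu}$ by arbitrarily assigning labels to the dollars held by each agent. The two dollar-wise processes are then coupled synchronously: at every event the same dollar index $k\in\{1,\ldots,N\mu\}$ and the same target agent $i\in\{1,\ldots,N\}$ are used for both ${\bf a}(t)$ and ${\bf b}(t)$. The key observation is that once dollar $k$ is picked for the first time, the coordinates $a_k$ and $b_k$ are overwritten by the same value and stay equal forever after. Hence, with $\tau_k$ denoting the first pick time of dollar $k$, the bound
\[
\mathbb{E}\,\rho({\bf a}(t),{\bf b}(t)) = \sum_{k=1}^{N\mu} \mathbb{P}(a_k(t)\neq b_k(t)) \leq \sum_{k=1}^{N\mu}\mathbb{P}(\tau_k>t) = N\mu\,\expo^{-t}
\]
follows immediately from the independence of the dollars and the fact that each $\tau_k$ is $\textrm{Exponential}(1)$.

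Next, I would transfer this dollar-wise bound to the agent-wise distance $\dd$ by means of the elementary inequality $\dd({\bf Q}^{\bf a},{\bf Q}^{\bf b})\leq \tfrac{2}{N}\,\rho({\bf a},{\bf b})$, obtained by expanding each indicator $\mathbbm{1}\{a_k=i\}-\mathbbm{1}\{b_k=i\}$ and applying the triangle inequality dollar by dollar. Setting ${\bf S}(t)={\bf Q}^{{\bf a}(t)}$ and ${\bf R}(t)={\bf Q}^{{\bf b}(t)}$ produces two genuine realizations of the agent-wise poor-biased dynamics, and this provides an explicit coupling of $\mathcal{L}({\bf S}(t))$ and $\mathcal{L}({\bf R}(t))$ under which the expected distance is bounded by $2\mu\,\expo^{-t}$; by the definition of $\mathcal{W}_1$, the same bound holds for the Wasserstein distance. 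For the second assertion, I would sample ${\bf R}(0)\sim \mathcal{M}_N$ and appeal to the stationarity of $\mathcal{M}_N$ to conclude ${\bf R}(t)\sim\mathcal{M}_N$ for all $t\geq 0$; applying the first inequality to this coupling (and integrating over the initial condition) then yields \eqref{eq:conver_to_multinomial}.

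The only step requiring genuine care is the verification that the projection of the coupled dollar-wise processes reproduces exactly the rates $\lambda S_i/N$ appearing in \eqref{poor_biased_exchange}; once the per-dollar clock is tuned so that its effective (non-self) jump rate equals one, this is a direct consequence of Poisson thinning and superposition. Everything else is bookkeeping: the initial lifts ${\bf a}(0),{\bf b}(0)$ are arbitrary because the final bound does not depend on the choice, and the crucial $N$-independence of the right-hand side stems from the exact cancellation of the $1/N$ factor in $\dd$ against the $N$ factor in $M=N\mu$.
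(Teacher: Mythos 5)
Your proposal is correct and follows essentially the same argument the paper gives in Section \ref{subsec:2.1}: the synchronous dollar-wise coupling with $\mathbb{P}(a_k(t)\neq b_k(t))\leq \mathbb{P}(\tau_k>t)$, the projection inequality $\dd({\bf Q}^{\bf a},{\bf Q}^{\bf b})\leq \tfrac{2}{N}\rho({\bf a},{\bf b})$, and stationarity of $\mathcal{M}_N$ to obtain \eqref{eq:conver_to_multinomial}. The only point the paper handles slightly differently is that it fixes the per-dollar clock rate $\tfrac{N}{N-1}$ up front so that the effective jump rate is $1$ and the projected process has exactly the rates in \eqref{poor_biased_exchange}, which is the same thinning observation you mention at the end.
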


\begin{remark}
As any coupling of the full vector gives rise to a coupling of the marginals, we immediately deduce from Proposition \ref{prop:contraction_coupling} that
\begin{equation}\label{eq:conver_to_possion}
\mathcal{W}_1\left(\mathcal{L}\left(S_1(t)\right),\textrm{Binomial}\left(\mu N, \frac{1}{N}\right)\right) \leq 2\,\mu\,\expo^{-t}.
\end{equation}
Denoting by $\mathcal{M}_{N,k}$ the $k$-particle marginal distribution of $\mathcal{M}_N$ for each fixed $k \geq 1$, one also has
\begin{equation}\label{eq:conver_to_possion_k_particle}
\mathcal{W}_1\left(\mathcal{L}\left(S_1(t),S_2(t),\ldots,S_k(t)\right),\mathcal{M}_{N,k}\right) \leq 2\,\mu\,\expo^{-t}.
\end{equation}
\end{remark}

\subsection{Chaos of the multinomial to Poisson}\label{subsec:2.2}

In this subsection, we will establish a quantitative bound on the Wasserstein distance between the multinomial distribution $\mathcal{M}_N$ and the tensorized Poisson distribution $\textrm{Poisson}(\mu)^{\otimes N}$, which might be of independent interest. We recall that $\mathcal{M}_N$, rigorously defined by \eqref{eq:multinomial}, is the distribution corresponding to the experiment of tossing $N \mu$ balls independently in $N$ equally likely urns.

Our goal is to show the following bound:

\begin{proposition}
\label{prop:chaos_of_the_multinomial}
For each $\mu \in \mathbb{N}_+$ and each $N \geq 2$, we have
\begin{equation}
\label{eq:W1_multi_possion}
\mathcal{W}_1\left(\mathcal{M}_N,\textrm{Poisson}(\mu)^{\otimes N}\right)
\leq \frac{\sqrt{2\mu/\pi}}{\sqrt{N}}
\leq \frac{\sqrt{\mu}}{\sqrt{N}}.
\end{equation}
\end{proposition}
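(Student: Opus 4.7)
The plan is to exploit the classical Poisson--multinomial duality: if $Y_1,\ldots,Y_N$ are i.i.d.\ $\mathrm{Poisson}(\mu)$, then conditionally on $S\coloneqq\sum_{i=1}^N Y_i = N\mu$, the vector $(Y_1,\ldots,Y_N)$ has exactly the multinomial distribution $\mathcal{M}_N$. Since $S \sim \mathrm{Poisson}(N\mu)$ is concentrated around $N\mu$ at scale $\sqrt{N\mu}$, this suggests coupling $(Y_i)\sim\mathrm{Poisson}(\mu)^{\otimes N}$ to some $(X_i)\sim\mathcal{M}_N$ via a correction whose size is governed only by $|S - N\mu|$.

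Concretely, after sampling $(Y_1,\ldots,Y_N)$, I build $(X_1,\ldots,X_N)$ from $(Y_i)$ by thinning or augmenting depending on the sign of $S - N\mu$: if $S > N\mu$, remove $S - N\mu$ balls chosen uniformly at random without replacement among the $S$ balls represented by $(Y_i)$; if $S < N\mu$, place $N\mu - S$ additional balls independently and uniformly at random into the $N$ urns; if $S = N\mu$, set $X_i = Y_i$. Conditionally on $S = s$, the Poisson--multinomial duality makes $(Y_1,\ldots,Y_N)$ a multinomial$(s, 1/N,\ldots, 1/N)$ vector, and a standard exchangeability argument shows that the uniform thinning (resp.\ augmentation) step produces a multinomial$(N\mu, 1/N, \ldots, 1/N)$ vector; integrating out $S$ then gives $(X_i) \sim \mathcal{M}_N$. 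Since by construction $\sum_{i=1}^N |X_i - Y_i| = |S - N\mu|$, the definition of $\mathcal{W}_1$ yields
\begin{equation*}
\mathcal{W}_1\bigl(\mathcal{M}_N, \mathrm{Poisson}(\mu)^{\otimes N}\bigr) \leq \frac{1}{N}\,\mathbb{E}\,|S - N\mu|.
\end{equation*}

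To conclude, it suffices to estimate the mean absolute deviation of a $\mathrm{Poisson}(N\mu)$ variable. Because $\mathbb{E}(S - N\mu) = 0$, one has $\mathbb{E}|S - N\mu| = 2\,\mathbb{E}(S - N\mu)^+$, and the basic identity $k\,\mathbb{P}(S = k) = N\mu\,\mathbb{P}(S = k-1)$ allows one to telescope $\mathbb{E}[S\,\mathbbm{1}\{S \geq N\mu + 1\}]$ into $N\mu\,\mathbb{P}(S \geq N\mu)$, yielding the classical closed form
\begin{equation*}
\mathbb{E}\,|S - N\mu| = 2\,N\mu\,\mathbb{P}(S = N\mu) = 2\,N\mu\,\frac{(N\mu)^{N\mu}\,\expo^{-N\mu}}{(N\mu)!}.
\end{equation*}
Applying Stirling's lower bound $n! \geq \sqrt{2\pi n}\,(n/\expo)^n$ with $n = N\mu$ gives $\mathbb{E}|S - N\mu| \leq \sqrt{2N\mu/\pi}$, and dividing by $N$ produces the announced inequality.

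The only non-cosmetic point in this plan is verifying that the thinning/augmentation recipe really transports $\mathrm{Poisson}(\mu)^{\otimes N}$ onto $\mathcal{M}_N$; I expect this to be routine once one conditions on $S$ and invokes the Poisson--multinomial duality together with the exchangeability of i.i.d.\ ball placements. Notably, no sharper concentration inequality for $S$ is needed, since the explicit identity above combined with Stirling already produces the sharp constant $\sqrt{2/\pi}$.
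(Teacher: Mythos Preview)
Your proof is correct and follows essentially the same route as the paper: both construct a coupling in which all differences $X_i - Y_i$ share a common sign, reduce the cost to $\frac{1}{N}\,\mathbb{E}|S - N\mu|$ with $S \sim \mathrm{Poisson}(N\mu)$, and finish via the exact mean-absolute-deviation identity together with Stirling's lower bound. The only cosmetic difference is in how the coupling is described---the paper realizes it by a ``rising bar'' over $N$ independent one-dimensional Poisson processes, whereas you phrase it as thinning/augmentation via the Poisson--multinomial duality---but these two descriptions produce the same joint law and the same arithmetic.
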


\begin{proof}
We proceed by a coupling argument: the idea is to define random vectors
\[
{\bf X}
= \begin{pmatrix}
X_1 \\ X_2 \\ \vdots \\ X_N
\end{pmatrix}
\sim \mathcal{M}_N,
\qquad
{\bf Y}
= \begin{pmatrix}
Y_1 \\ Y_2 \\ \vdots \\ Y_N
\end{pmatrix}
\sim \textrm{Poisson}(\mu)^{\otimes N}
\]
in a convenient way. Specifically, for each bin $i = 1,2,\ldots, N$, we consider an infinite collection of $\textrm{Exponential}(1)$ distributed independent random variables, stacked together. That is, we have $N$ independent copies of a one-dimensional Poisson process (see Figure \ref{fig:illustration_coupling_abstract} for an illustration). Every interval represents a ball that can potentially be tossed in the bin underneath.

\begin{figure}[!htb]
  \centering
  \includegraphics[scale = 1.2]{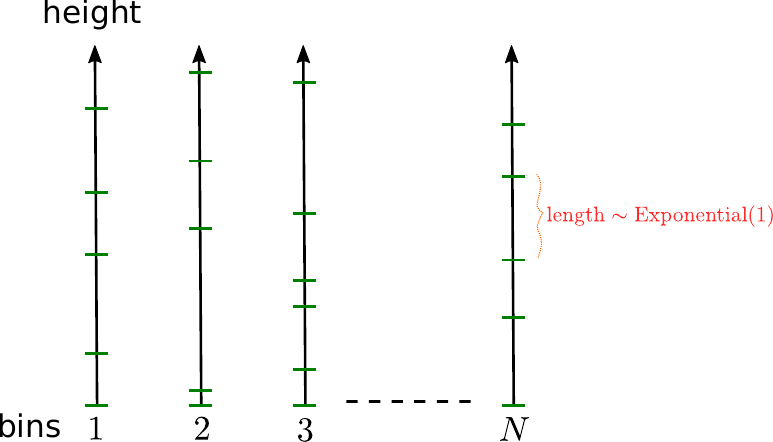}
  \caption{Construction of $N$ independent copies of a one-dimensional Poisson process via an infinite collection of i.i.d. exponentially distributed random variables.}
  \label{fig:illustration_coupling_abstract}
\end{figure}

Now, starting from height $0$, start rising a bar covering all bins, until exactly $\mu N$ full intervals lie below the bar. Call
\[
X_i
\coloneqq \textrm{number of full intervals below this bar at bin $i$}
\]
for each $1\leq i \in N$. It is readily seen that ${\bf X} \sim \mathcal{M}_N$, because ``rising a bar'' is the same as adding new balls at random among the $N$ bins, thanks to the loss of memory property. Also, let
\[
Y_i \coloneqq
\textrm{number of full intervals below height $\mu$ at bin $i$}
\]
for each $1\leq i \in N$. Clearly, $Y_1, Y_2,\ldots, Y_N$ are independent $\textrm{Poisson}(\mu)$ distributed random variables. See Figure \ref{fig:illustration_coupling_example} for a concrete illustration of the constructed coupling.

\begin{figure}[!htb]
  \centering
  \includegraphics[scale = 0.6]{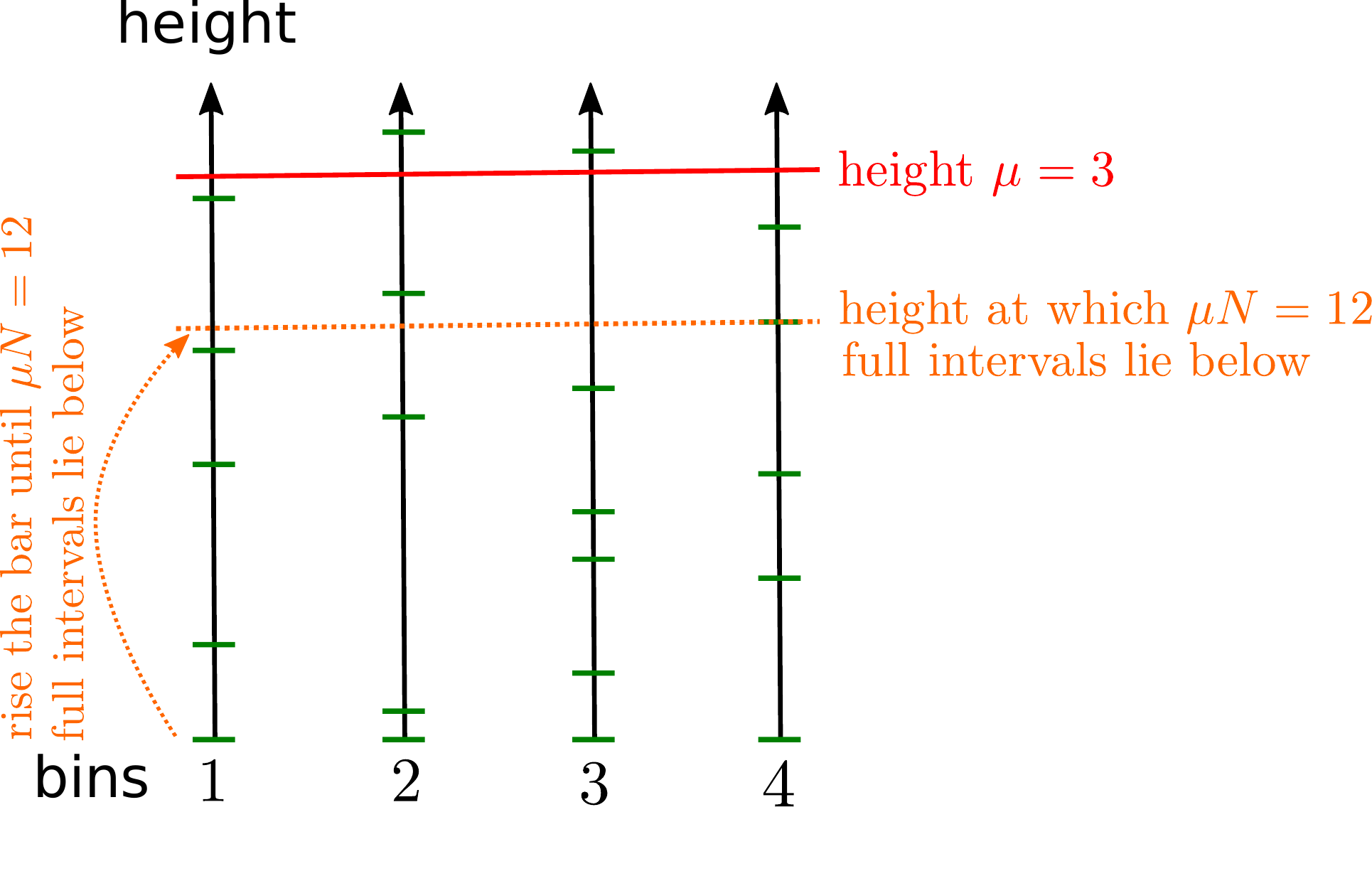}
  \caption{Coupling of the two random vectors ${\bf X} \sim \mathcal{M}_N$ and ${\bf Y} \sim \textrm{Poisson}(\mu)^{\otimes N}$ with $N = 4$ and $\mu = 3$. In this example, ${\bf X} = (X_1 = 4, X_2 = 3, X_3 = 4, X_4 = 4)$ and ${\bf Y} = (Y_1 = 3, Y_2 = 2, Y_3 = 4, Y_4 = 3)$.}
  \label{fig:illustration_coupling_example}
\end{figure}

Observe that to go from ${\bf X}$ to ${\bf Y}$ (or vice versa), we either add balls (i.e., full intervals) to some bins or remove balls from some bins, but we never add balls to some bins and remove balls from other bins simultaneously. This means that $X_i - Y_i$ has the same sign for all $i = 1,2,\ldots, N$. Also, recall that for a random variable $Z \sim \textrm{Poisson}(m)$ with $m$ integer, we have the following exact formula for the expected mean deviation:
\[
\mathbb{E}[|Z-m|]
= 2 \expo^{-m} \frac{m^{m+1}}{m!},
\]
see for instance \cite{crow_mean_1958,ramasubban_mean_1958}. Consequently, defining $Z \coloneqq \sum_{i=1}^N Y_i \sim \textrm{Poisson}(\mu N)$, we deduce that
\begin{equation}
\label{eq:can_be_improved}
\begin{aligned}
\mathbb{E}\left[\frac{1}{N}\,\sum\limits_{i=1}^N |X_i - Y_i|\right]
&= \frac{1}{N}\,\mathbb{E}\left[\left|\sum\limits_{i=1}^N X_i - \sum\limits_{i=1}^N Y_i\right|\right] \\
&= \frac{1}{N}\,\mathbb{E}[|\mu N - Z|] \\
&= \frac{1}{N} 2 \expo^{-\mu N} \frac{(\mu N)^{\mu N + 1}}{(\mu N)!} \\
&\leq \frac{\sqrt{2\mu/\pi}}{\sqrt{N}},
\end{aligned}
\end{equation}
where we have used the bound $n! \geq \sqrt{2 \pi n}\,(n / \expo)^n$, coming from Stirling's approximation. The desired bound now follows.
\end{proof}

\begin{remark}
Since a coupling of the marginals can be induced from a coupling of the full vector, Proposition \ref{prop:chaos_of_the_multinomial} implies that
\begin{equation}\label{eq:W1_binom_possion}
\mathcal{W}_1\left(\textrm{Binomial}\left(\mu N, \frac{1}{N}\right),\textrm{Poisson}(\mu)\right)
\leq \frac{\sqrt{2\mu/\pi}}{\sqrt{N}}
\leq \frac{\sqrt{\mu}}{\sqrt{N}},
\end{equation}
as well as
\begin{equation}\label{eq:W1_k_nomial_possion}
\mathcal{W}_1\left(\mathcal{M}_{N,k},\textrm{Poisson}(\mu)^{\otimes k}\right)
\leq \frac{\sqrt{2\mu/\pi}}{\sqrt{N}}
\leq \frac{\sqrt{\mu}}{\sqrt{N}},
\end{equation}
for each fixed $1\leq k \leq N$.
\end{remark}


\subsection{Equilibration of the mean-field ODE towards Poisson}\label{subsec:2.3}

We are now ready to establish the last piece of result before we prove the desired uniform in time propagation of chaos. We first recall that a non-uniform propagation of chaos result for the poor-biased exchange model has already been obtained in \cite[Theorem 2]{cao_derivation_2021}: assuming $\mathcal{L}\left(S_1(0)\right) = {\bf p}(0)$, then for each $t \geq 0$ and each $N \geq 2$, we have
\begin{equation}
\label{eq:non_uniform_chaos}
\mathcal{W}_1\left(\mathcal{L}\left(S_1(t)\right), {\bf p}(t)\right) \leq \frac{4\,\mu\,\expo^t}{N},
\end{equation}
where ${\bf p}(t)$ is the unique solution to the mean-field system of linear ODEs \eqref{eq:law_limit}. Combining this result with Propositions \ref{prop:contraction_coupling} and \ref{prop:chaos_of_the_multinomial} leads us to the following estimate:

\begin{proposition}\label{prop:equilibration_of_ODE}
Suppose that ${\bf p}(t)=\{p_n(t)\}_{n\geq 0}$ is the unique solution of \eqref{eq:law_limit} and ${\bf p}^*$ is the equilibrium Poisson distribution \eqref{eqn:equil_limit} to \eqref{eq:law_limit}. Then for each $t \geq 0$,
\begin{equation}\label{eq:large_time_convergence}
\mathcal{W}_1\left({\bf p}(t), {\bf p}^*\right) \leq 2\,\mu\,\expo^{-t}.
\end{equation}
\end{proposition}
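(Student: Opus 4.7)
The plan is to apply a triple triangle inequality that connects the two deterministic objects ${\bf p}(t)$ and ${\bf p}^*$ through the stochastic $N$-agent dynamics, and then exploit the fact that the left-hand side is independent of $N$ by sending $N \to \infty$. Specifically, for any $N \geq 2$, I would arrange an initial configuration of the $N$-agent poor-biased system such that $\mathcal{L}(S_1(0)) = {\bf p}(0)$ (e.g., by taking chaotic initial data with marginal ${\bf p}(0)$), and then insert both $\mathcal{L}(S_1(t))$ and $\textrm{Binomial}(\mu N, 1/N)$ as intermediate laws:
\[
\mathcal{W}_1\left({\bf p}(t),{\bf p}^*\right)
\leq \mathcal{W}_1\left({\bf p}(t),\mathcal{L}(S_1(t))\right)
+ \mathcal{W}_1\left(\mathcal{L}(S_1(t)),\textrm{Binomial}(\mu N, 1/N)\right)
+ \mathcal{W}_1\left(\textrm{Binomial}(\mu N, 1/N),{\bf p}^*\right).
\]

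Each of the three pieces on the right is already controlled by a bound obtained earlier in this section. The first piece is handled by the non-uniform propagation of chaos estimate \eqref{eq:non_uniform_chaos}, giving $4\mu\,\expo^{t}/N$; the second is bounded by $2\mu\,\expo^{-t}$ via \eqref{eq:conver_to_possion}, uniformly in $N$; and the third by $\sqrt{\mu/N}$ via \eqref{eq:W1_binom_possion}. Summing these contributions yields, for every $N \geq 2$ and every $t \geq 0$,
\[
\mathcal{W}_1\left({\bf p}(t),{\bf p}^*\right) \leq \frac{4\mu\,\expo^{t}}{N} + 2\mu\,\expo^{-t} + \frac{\sqrt{\mu}}{\sqrt{N}}.
\]

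Since the left-hand side does not depend on $N$, taking $N \to \infty$ for each fixed $t \geq 0$ kills the first and third contributions, leaving exactly the middle term $2\mu\,\expo^{-t}$. The slightly counterintuitive point here is the willingness to invoke the short-time chaos estimate \eqref{eq:non_uniform_chaos} despite its exploding prefactor $\expo^{t}$: the factor is harmless because it is divided by $N$, and the limit $N \to \infty$ is legitimate precisely because ${\bf p}(t)$ and ${\bf p}^*$ are pure mean-field objects with no $N$-dependence. Beyond recognizing that these three already-proved bounds combine in this way, I do not foresee any serious technical obstacle.
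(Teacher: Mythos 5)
Your proposal is correct and matches the paper's own proof essentially verbatim: the same triangle inequality through $\mathcal{L}(S_1(t))$ and $\textrm{Binomial}(\mu N, 1/N)$, the same three estimates \eqref{eq:non_uniform_chaos}, \eqref{eq:conver_to_possion}, \eqref{eq:W1_binom_possion}, and the same conclusion by letting $N \to \infty$ since the left-hand side is $N$-independent. No gaps to report.
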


\begin{proof} An elementary application of the triangle inequality for the Wasserstein distance $W_1$, together with the estimates \eqref{eq:conver_to_possion}, \eqref{eq:W1_binom_possion} and \eqref{eq:non_uniform_chaos}, yields
\[
\begin{aligned}
\mathcal{W}_1\left({\bf p}(t), {\bf p}^*\right) &\leq \mathcal{W}_1\left({\bf p}(t), \mathcal{L}\left(S_1(t)\right)\right) + \mathcal{W}_1\left(\mathcal{L}\left(S_1(t)\right),\textrm{Binomial}\left(\mu N, \frac{1}{N}\right)\right) \\
&\qquad + \mathcal{W}_1\left(\textrm{Binomial}\left(\mu N, \frac{1}{N}\right),{\bf p}^*\right) \\
&\leq \frac{4\,\mu\,\expo^t}{N} + 2\,\mu\,\expo^{-t} + \frac{\sqrt{\mu}}{\sqrt{N}}.
\end{aligned}
\]
As this estimate is valid for any $N$, sending $N \to \infty$ gives rise to the desired bound \eqref{eq:large_time_convergence}.
\end{proof}

\begin{remark}
It is very natural to expect that a result of the type \eqref{eq:large_time_convergence}, which concerns only the large-time behavior of the mean-field ODE system \eqref{eq:law_limit}, can be obtained in a purely analytic way without resorting to any probabilistic argument of the underlying stochastic agent-based dynamics. Indeed, it has already been shown in \cite{cao_derivation_2021} that the so-called $\chi^2$ ``distance'' from ${\bf p}(t)$ to ${\bf p}^*$, defined by
\begin{equation*}
\chi^2\left({\bf p}(t),{\bf p}^*\right) \coloneqq \sum\limits_{n=0}^\infty \frac{|p_n(t) - p^*_n|^2}{p^*_n},
\end{equation*}
satisfies
\begin{equation}
\label{eq:exponential_decay_chi2}
\chi^2\left({\bf p}(t),{\bf p}^*\right) \leq \chi^2\left({\bf p}(0),{\bf p}^*\right)\,\expo^{-t}.
\end{equation}
Taking into account the possibility to bound the Wasserstein distance $\mathcal{W}_1\left({\bf p}(t), {\bf p}^*\right)$ by the $\chi^2$ ``distance'' $\chi^2\left({\bf p}(t),{\bf p}^*\right)$, such as (see for instance \cite{pinelis_inequality_2022} for its proof)
\begin{equation}\label{eq:bound_W1_by_chi2}
\mathcal{W}_1\left({\bf p}(t), {\bf p}^*\right) \leq \sqrt{\mu^2 + \mu}\,\sqrt{\chi^2\left({\bf p}(t),{\bf p}^*\right)},
\end{equation}
we obtain
\begin{equation}\label{eq:large_time_convergence_version2}
\mathcal{W}_1\left({\bf p}(t), {\bf p}^*\right) \leq \sqrt{\mu^2 + \mu}\,\sqrt{\chi^2\left({\bf p}(0),{\bf p}^*\right)}\,\expo^{-\frac{t}{2}}.
\end{equation}
Notice that \eqref{eq:large_time_convergence} is sharper than \eqref{eq:large_time_convergence_version2}. However, it is conjectured in \cite{cao_derivation_2021} that $\chi^2\left({\bf p}(t),{\bf p}^*\right)$ decays like $\expo^{-2\,t}$ (based on some heuristic reasoning and numerical experiments), which would then lead to an estimate similar to \eqref{eq:large_time_convergence}.
\end{remark}

\subsection{Proof of the uniform propagation of chaos}\label{subsec:2.4}

We now assemble all the previous estimates together to prove a uniform-in-time propagation of chaos result for the poor-biased exchange dynamics. The key idea behind the proof is to carefully choose the time $t$ as a function of the number of agents $N$, inspired from a recent work \cite{cao_interacting_2022} on a closely-related model.

We will need the following general version of \eqref{eq:non_uniform_chaos}, see \cite[Theorem 2]{cao_derivation_2021}: assuming that $S_1(t),\ldots,S_N(t)$ are i.i.d.\ with law ${\bf p}(0)$, then for any fixed number of marginals $k$, we have
\begin{equation}\label{eq:non_uniform_chaos_k_particles}
\mathcal{W}_1\left(\mathcal{L}\left(S_1(t),\ldots,S_k(t)\right), {\bf p}(t)^{\otimes k}\right) \leq \frac{4\,k\,\mu\,\expo^t}{N}.
\end{equation}

\begin{theorem}
\label{thm:uniform_poc_k_particle_marginal}
Assume that $S_1(t),\ldots,S_N(t)$ are i.i.d.\ with law ${\bf p}(0)$. Then, for all fixed $k \geq 1$ and for all $N \geq 2$ and $t \geq 0$, we have
\begin{equation}
\label{eq:uniform_poc_k_particle_marginal}
\mathcal{W}_1\left(\mathcal{L}\left(S_1(t),\ldots,S_k(t)\right), {\bf p}(t)^{\otimes k}\right) \leq \frac{4\,k\,\mu + \sqrt{\mu}}{\sqrt{N}}.
\end{equation}
\end{theorem}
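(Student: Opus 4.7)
The plan is to combine the short-time propagation of chaos \eqref{eq:non_uniform_chaos_k_particles} with a long-time bound coming from the three-piece triangle inequality \eqref{eq:roadmap}, and to optimize over the crossover time as a function of $N$, in the spirit of the argument of \cite{cao_interacting_2022}.

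First, I would assemble a long-time bound by controlling the three terms on the right-hand side of \eqref{eq:roadmap}. By \eqref{eq:conver_to_possion_k_particle} (the $k$-marginal version of Proposition \ref{prop:contraction_coupling}), the first term is at most $2\,\mu\,\expo^{-t}$. By \eqref{eq:W1_k_nomial_possion}, the second term is at most $\sqrt{\mu}/\sqrt{N}$. For the third term, I would tensorize the optimal one-dimensional coupling between ${\bf p}(t)$ and ${\bf p}^*$: using independent copies of that coupling in each of the $k$ coordinates, together with the dimension-normalized Wasserstein metric \eqref{def:Wasserstein}, gives
\[
\mathcal{W}_1\left({\bf p}(t)^{\otimes k}, ({\bf p}^*)^{\otimes k}\right) \leq \mathcal{W}_1\left({\bf p}(t), {\bf p}^*\right) \leq 2\,\mu\,\expo^{-t},
\]
where the last inequality is Proposition \ref{prop:equilibration_of_ODE}. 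Summing the three pieces yields the long-time bound $4\,\mu\,\expo^{-t} + \sqrt{\mu}/\sqrt{N}$.

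The second ingredient is the short-time bound \eqref{eq:non_uniform_chaos_k_particles}, namely $4\,k\,\mu\,\expo^{t}/N$. To combine them, I would set $t^\ast := \tfrac12 \log N$ and split into two regimes. For $t \leq t^\ast$ we have $\expo^{t} \leq \sqrt{N}$, so the short-time bound gives $4\,k\,\mu\,\expo^{t}/N \leq 4\,k\,\mu/\sqrt{N}$. For $t > t^\ast$ we have $\expo^{-t} < 1/\sqrt{N}$, so the long-time bound gives $4\,\mu\,\expo^{-t} + \sqrt{\mu}/\sqrt{N} < 4\,\mu/\sqrt{N} + \sqrt{\mu}/\sqrt{N}$. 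Since $k \geq 1$, in either regime the bound is at most $(4\,k\,\mu + \sqrt{\mu})/\sqrt{N}$, which is precisely \eqref{eq:uniform_poc_k_particle_marginal}.

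The main subtle point I expect is bookkeeping with the dimension normalization in \eqref{def:Wasserstein}. Proposition \ref{prop:contraction_coupling} is stated using the $N$-normalized distance \eqref{eq:d}, and the coupling constructed there is exchangeable, so
\[
\mathbb{E}\Bigl[\tfrac{1}{N}\sum_{i=1}^N |S_i - R_i|\Bigr] = \mathbb{E}[|S_1 - R_1|] = \mathbb{E}\Bigl[\tfrac{1}{k}\sum_{i=1}^k |S_i - R_i|\Bigr];
\]
this is what allows the $N$-agent contraction bound $2\,\mu\,\expo^{-t}$ to descend without loss to the $k$-marginal with its own dimension-normalized Wasserstein metric, and an analogous symmetry underlies the tensorization step above. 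Once these normalizations are handled carefully, everything else in the proof is a clean case split on $t$ versus $t^\ast$.
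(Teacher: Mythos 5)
Your proposal is correct and follows essentially the same route as the paper: the same three-term triangle inequality, the same bounds $2\,\mu\,\expo^{-t}$, $\sqrt{\mu}/\sqrt{N}$, and $2\,\mu\,\expo^{-t}$ on the respective pieces (including the tensorization of Proposition \ref{prop:equilibration_of_ODE}), and the same case split at $T=\tfrac12\log N$ against the short-time bound \eqref{eq:non_uniform_chaos_k_particles}. Your extra remark on the dimension normalization in \eqref{def:Wasserstein} is exactly the point the paper invokes when passing to \eqref{eq:large_time_convergence_k_particle}, so nothing is missing.
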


\begin{proof}
Notice that the appropriate scaling in the definition of Wasserstein distance \eqref{def:Wasserstein} ensures that a tensorized version of \eqref{eq:large_time_convergence} remains valid as well, i.e.,
\begin{equation}\label{eq:large_time_convergence_k_particle}
\mathcal{W}_1\left({\bf p}(t)^{\otimes k}, ({\bf p}^*)^{\otimes k}\right) \leq 2\,\mu\,\expo^{-t}.
\end{equation}
Thus, from \eqref{eq:conver_to_possion_k_particle}, \eqref{eq:W1_k_nomial_possion} and \eqref{eq:large_time_convergence_k_particle}, we have
\begin{align*}
&\mathcal{W}_1\left(\mathcal{L}\left(S_1(t),\ldots,S_k(t)\right), {\bf p}(t)^{\otimes k}\right) \\
&\leq \mathcal{W}_1\left(\mathcal{L}\left(S_1(t),\ldots,S_k(t)\right), \mathcal{M}_{N,k} \right)
+ \mathcal{W}_1\left(\mathcal{M}_{N,k}, ({\bf p}^*)^{\otimes k} \right)
+ \mathcal{W}_1\left( ({\bf p}^*)^{\otimes k}, {\bf p}(t)^{\otimes k} \right) \\
&\leq 4\,\mu\,\expo^{-t} + \frac{\sqrt{\mu}}{\sqrt{N}}.
\end{align*}
Now, set $T = \frac{\log N}{2}$. Using the last estimate when $t\geq T$, and \eqref{eq:non_uniform_chaos_k_particles} when $t\leq T$, gives
\begin{equation}
\label{eq:two_cases}
\mathcal{W}_1\left(\mathcal{L}\left(S_1(t),\ldots,S_k(t)\right), {\bf p}(t)^{\otimes k}\right) \leq
    \begin{cases}
        \frac{4\,k\,\mu}{\sqrt{N}} & \text{if } t \leq T,\\
        \frac{4\,\mu + \sqrt{\mu}}{\sqrt{N}} & \text{if } t \geq T,
    \end{cases}
\end{equation}
from which \eqref{eq:uniform_poc_k_particle_marginal} follows.
\end{proof}

\section{Conclusion}
\setcounter{equation}{0}

In this manuscript, an agent-based dollar exchange model (called the poor-biased exchange model in \cite{cao_derivation_2021}) for wealth (re-)distribution is studied. We rigorously proved a uniform in time propagation of chaos result for this model which, to the best of our knowledge, is not available in the literature prior to the present work. Our proof is based on several probabilistic coupling approaches and the non-uniform propagation of chaos established in \cite{cao_derivation_2021}, as well as some ideas from the recent work \cite{cao_interacting_2022} for a closely-related model. We emphasize that the poor-biased exchange model investigated in this paper has at least two ``siblings'', known as the unbiased exchange model and the rich-biased exchange model \cite{cao_derivation_2021}, where the rate that a typical agent will be picked to give are modified according to \eqref{unbiased_biased_exchange} \eqref{rich_biased_exchange}, respectively. One possible follow-up work would be to have a rigorous proof of the sharp estimate (recall \eqref{eq:exponential_decay_refined} or equivalently \eqref{eq:exponential_decay_chi2}) for the solution of the mean-field ODE system \eqref{eq:law_limit} in the $\chi^2$ ``distance'' conjectured in \cite{cao_derivation_2021}.

As of now, one exciting direction of research for econophysics models involves the inclusion of a central bank or even several banks and hence the possibility of agents being in debt (see for instance the recent work \cite{cao_uncovering_2022,lanchier_rigorous_2019}). We plan to extend the framework and analysis of the present work to this more realistic setting.

\end{document}